\newtheorem{theorem}{Theorem}[section]
\newtheorem{lemma}[theorem]{Lemma}
\newtheorem{corollary}[theorem]{Corollary}
\newtheorem{prop}[theorem]{Proposition}
\newenvironment{proof}{{\em Proof.}}{\hspace*{\fill}$\Box$\par\vspace{4mm}}
\def\BigOh{\mathop{\rm O}}
\def\x{\times}
\begin{document}

\title{A lower bound for the smallest eigenvalue of a graph and an application to the associahedron graph}

\author{
Sebastian M. Cioab\u{a}\footnote{Department of Mathematical Sciences, University of Delaware, Newark, DE 19716-2553, USA, {\tt cioaba@udel.edu}. This research has been partially supported by NSF grant CIF-1815922 and a JSPS Invitational Fellowship for Research in Japan S19016.} \hspace{1cm} Vishal Gupta\footnote{Department of Mathematical Sciences, University of Delaware, vishal@udel.edu.}}

\date{\today}
\maketitle
\begin{abstract}
In this paper, we obtain a lower bound for the smallest eigenvalue of a regular graph containing many copies of a smaller fixed subgraph. This generalizes a result of Aharoni, Alon, and Berger in which the subgraph is a triangle. We apply our results to obtain a lower bound on the smallest eigenvalue of the associahedron graph, and we prove that this bound gives the correct order of magnitude of this eigenvalue. We also survey what is known regarding the second-largest eigenvalue of the associahedron graph.
\end{abstract}

\section{Introduction}

Our graph notation is standard, see \cite{BH} for undefined terms or notations. The {\em eigenvalues} of a graph $G=(V,E)$ are the eigenvalues of its adjacency matrix $A=A(G)$. For a graph $G$ with $n$ vertices and $\ell\geq 1$, denote by $\lambda_{\ell}(G)$ the $\ell$-th greatest eigenvalue of $G$ and let $\lambda^{\ell}(G)=\lambda_{n-\ell+1}(G)$ be its $\ell$-th smallest eigenvalue. Let $\lambda_{min}(G)$ denote the smallest eigenvalue $\lambda^1(G)$. The smallest eigenvalue of a graph is related to its chromatic number and independence number \cite{BH,GM} and has close connections to the max-cut of the graph \cite{AS,BCIM,GW,K}. Since the spectrum of a connected graph is symmetric if and only if the graph is bipartite (see \cite[Section 3.4]{BH} for example), it is natural to think of $\lambda_{min}(G)$ as a measure of the bipartiteness of $G$ (see \cite{Tre}). Aharoni, Alon, and Berger \cite{AAB} obtained a lower bound for the smallest eigenvalue of a regular graph where each vertex is contained in many triangles (see also \cite{CEG}). Knox and Mohar \cite{KM} obtained a lower bound for the smallest eigenvalue using graph decompositions and their work leads to a simpler proof of a result of Qiao, Jing, and Koolen \cite{QJK} on the smallest eigenvalue of a distance-regular graph.

In Section \ref{sec:oddcycle}, we obtain the following lower bound for the smallest eigenvalue of a regular graph.
\begin{theorem}\label{decK}
Let $K=(V,E)$ be a $k$-regular graph with $v$ vertices. Let $G$ be a $d$-regular graph having a collection $\mathcal{K}$ of subgraphs isomorphic to $K$ such that each vertex of $G$ is contained in at least $m$ copies of $K$ and each edge of $G$ is contained in at most $t$ copies of $K$. Then
\begin{equation}
d+\lambda_{min}(G)\geq (k+\lambda_{min}(K))\cdot \frac{m}{t}.
\end{equation}
\end{theorem}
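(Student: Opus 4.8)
The plan is to combine the Rayleigh-quotient bound for $\lambda_{min}$ with the signless-Laplacian quadratic form. The starting point is the elementary identity that for any $h$-regular graph $H$ and any real vector $\vz$ indexed by $V(H)$,
\[
\vz^{T}\bigl(hI+A(H)\bigr)\vz\ =\ \sum_{uw\in E(H)}(z_u+z_w)^2 .
\]
Since $hI+A(H)$ is positive semidefinite (its eigenvalues are $h+\lambda_i(H)\ge h+\lambda_{min}(H)\ge 0$), this yields
\[
\sum_{uw\in E(H)}(z_u+z_w)^2\ \ge\ \bigl(h+\lambda_{min}(H)\bigr)\,\|\vz\|^2 \quad\text{for every }\vz .
\]
I will use this inequality for each copy $K'\in\mathcal{K}$ (which is $k$-regular, being isomorphic to $K$), and the identity itself for $H=G$.

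Next, fix a unit eigenvector $\vx$ of $A(G)$ for the eigenvalue $\lambda_{min}(G)$, so that $\sum_{uw\in E(G)}(x_u+x_w)^2=\vx^{T}(dI+A(G))\vx=d+\lambda_{min}(G)$. For each $K'\in\mathcal{K}$, let $\vx_{K'}$ be the restriction of $\vx$ to $V(K')$; the inequality above applied to $K'$ gives $\sum_{uw\in E(K')}(x_u+x_w)^2\ge(k+\lambda_{min}(K))\|\vx_{K'}\|^2$. Summing over all $K'\in\mathcal{K}$ and interchanging the order of summation on the left, every edge $uw\in E(G)$ is counted once for each of the (at most $t$) copies in $\mathcal{K}$ containing it, and as $(x_u+x_w)^2\ge 0$ the left side is bounded above by $t\sum_{uw\in E(G)}(x_u+x_w)^2=t\bigl(d+\lambda_{min}(G)\bigr)$. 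On the right, $\sum_{K'\in\mathcal{K}}\|\vx_{K'}\|^2=\sum_{u\in V(G)}\bigl|\{K'\in\mathcal{K}:u\in V(K')\}\bigr|\,x_u^2\ge m\sum_{u}x_u^2=m$, and since $k+\lambda_{min}(K)\ge 0$, the right side is bounded below by $m\bigl(k+\lambda_{min}(K)\bigr)$.

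Combining the two estimates gives $t\bigl(d+\lambda_{min}(G)\bigr)\ge m\bigl(k+\lambda_{min}(K)\bigr)$, which is the claimed bound after dividing by $t$. I expect the only points needing care to be the two sign conditions that keep the inequalities pointing the right way: $(x_u+x_w)^2\ge 0$, which lets me replace each per-edge multiplicity by its upper bound $t$; and $k+\lambda_{min}(K)\ge 0$, which lets me replace each per-vertex multiplicity by its lower bound $m$ without flipping the inequality. Beyond this and the routine bookkeeping of switching between edges and vertices of $G$ and those of the copies $K'$, there is no real obstacle — the proof is a weighted double-counting wrapped around the variational principle.
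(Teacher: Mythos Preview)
Your proof is correct and follows essentially the same approach as the paper: apply the signless-Laplacian inequality $\sum_{uw\in E(K')}(x_u+x_w)^2\ge(k+\lambda_{min}(K))\|\vx_{K'}\|^2$ to each copy $K'\in\mathcal{K}$ with $\vx$ a unit eigenvector for $\lambda_{min}(G)$, sum, and double-count by edges (bounded by $t$) and by vertices (bounded below by $m$). Your explicit attention to the two sign conditions ($ (x_u+x_w)^2\ge0$ and $k+\lambda_{min}(K)\ge0$) that keep the inequalities oriented correctly is a nice touch that the paper leaves implicit.
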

This result implies the bound of Aharoni, Alon, and Berger \cite{AAB}. We will use the following corollary to find a lower bound for the smallest eigenvalue of the associahedron graph. 
\begin{corollary}\label{oddcycle} Let $d\geq 3$ and $m,r,t\geq 1$ be integers. Let $G$ be a $d$-regular graph having a collection $\mathcal{C}$ of subgraphs isomorphic to $C_{2r+1}$ such that each vertex of $G$ is contained in at least $m$ cycles of length $2r+1$ and each edge of $G$ is contained in at most $t$ cycles of length $2r+1$. Then
\begin{equation}
d+\lambda_{min}(G)\geq 4\sin^2\left(\frac{\pi}{4r+2}\right)\cdot \frac{m}{t}.
\end{equation}
\end{corollary}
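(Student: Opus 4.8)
The plan is to derive Corollary~\ref{oddcycle} as a direct specialization of Theorem~\ref{decK}. Take $K = C_{2r+1}$, the odd cycle on $v = 2r+1$ vertices. This graph is $k$-regular with $k = 2$. The hypotheses of the corollary — that each vertex of $G$ lies in at least $m$ copies of $C_{2r+1}$ and each edge of $G$ lies in at most $t$ copies — are exactly the hypotheses of Theorem~\ref{decK} with this choice of $K$. So the only thing that really needs to be checked is the value of $\lambda_{min}(C_{2r+1})$, and then the arithmetic of plugging into the inequality $d + \lambda_{min}(G) \geq (k + \lambda_{min}(K)) \cdot \frac{m}{t}$.

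The key computational step is the eigenvalue fact: the eigenvalues of the cycle $C_n$ are $2\cos(2\pi j/n)$ for $j = 0, 1, \dots, n-1$, so for $n = 2r+1$ odd the smallest eigenvalue is $\lambda_{min}(C_{2r+1}) = 2\cos\left(\frac{2\pi r}{2r+1}\right)$, attained at $j = r$ (and $j = r+1$). First I would recall this standard spectral description of circulant graphs. Then I would massage the expression: writing $\frac{2\pi r}{2r+1} = \pi - \frac{\pi}{2r+1}$ gives $2\cos\left(\frac{2\pi r}{2r+1}\right) = -2\cos\left(\frac{\pi}{2r+1}\right)$, hence
\begin{equation*}
k + \lambda_{min}(C_{2r+1}) = 2 - 2\cos\left(\frac{\pi}{2r+1}\right) = 4\sin^2\left(\frac{\pi}{4r+2}\right),
\end{equation*}
using the half-angle identity $1 - \cos\theta = 2\sin^2(\theta/2)$ with $\theta = \frac{\pi}{2r+1}$. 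Substituting this into the conclusion of Theorem~\ref{decK} yields exactly $d + \lambda_{min}(G) \geq 4\sin^2\left(\frac{\pi}{4r+2}\right)\cdot \frac{m}{t}$, which is the claim.

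There is essentially no obstacle here: the corollary is a clean instantiation, and the only care needed is (i) confirming $C_{2r+1}$ is connected and $2$-regular so that Theorem~\ref{decK} applies with $k=2$, and (ii) getting the trigonometric simplification right, in particular that the minimizing index is $j = r$ for odd cycles (unlike even cycles, where the eigenvalue $-2$ occurs). One might also remark, as a sanity check, that the $r = 1$ case recovers $4\sin^2(\pi/6) = 1$, the Aharoni–Alon–Berger triangle bound $d + \lambda_{min}(G) \geq m/t$, consistent with the claim in the text that Theorem~\ref{decK} implies their result. I would close by noting the bound is only meaningful when $m/t$ is not too small, but this is a remark rather than part of the proof.
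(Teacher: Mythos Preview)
Your proof is correct and is exactly the paper's approach: the paper simply states that Corollary~\ref{oddcycle} follows by taking $K=C_{2r+1}$ in Theorem~\ref{decK}, and you have supplied the routine trigonometric verification that $2+\lambda_{min}(C_{2r+1})=4\sin^2\!\left(\frac{\pi}{4r+2}\right)$.
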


In Section \ref{sec:assoc1}, we discuss the flip graph on the triangulations of a convex $n$-gon, also known as the associahedron graph $\mathcal{A}_n$. Let $n\geq 4$ and consider a convex $n$-gon $P$ whose vertices are labeled $1,2,\dots, n$. The set of vertices $T_n$ of $\mathcal{A}_n$ consists of the triangulations of $P$ with $n-3$ non-crossing diagonals. Two distinct triangulations are adjacent if they share $n-4$ diagonals. Equivalently, each neighbor of a triangulation $T$ can be obtained by flipping one of its diagonals (deleting one of its  diagonals, creating a quadrilateral in which one adds the other diagonal). The associahedron graph $\mathcal{A}_n$ is $1$-skeleton of the associahedron, an $(n-3)$-dimensional convex polytope that arises in many areas of mathematics \cite{FZ,MaPi,Po} and is also known as the Stasheff polytope \cite{Stasheff} or the Tamari lattice \cite{Tamari}. The graph $\mathcal{A}_n$ is $(n-3)$-regular and its number of vertices equals the Catalan number $C_{n-2}=\frac{\binom{2n-4}{n-2}}{n-1}$. The combinatorial properties of $\mathcal{A}_n$ have been investigated by several authors. Lucas \cite{Lucas} showed that $\mathcal{A}_n$ is Hamiltonian when $n\geq 5$. Lee \cite{Lee} proved that the automorphism group of $\mathcal{A}_n$ is the dihedral group of order $2n$. Pournin \cite{Pou} determined its diameter and showed that it equals $2n-10$ for $n>12$, confirming a conjecture of Sleator, Tarjan, and Thurston \cite{SlTaTh}.  Molloy, Reed, and Steiger \cite{MRS} studied the properties of the usual Markov chain/random walk on $\mathcal{A}_n$ in which one starts at a vertex and then selects a neighbor uniformly at random. Some of their results were improved by McShine and Tetali \cite{McST} and more recently by Eppstein and Frishberg \cite{Eppstein}.

In \cite{FFHHUW}, Fabila-Monroy, Flores-Penaloza, Huemer, Hurtado, Urrutia, and Wood study the chromatic number of various flip graphs such as the flip graph on perfect matchings of the complete graph $K_{2n}$ (see \cite{CRT} for related results) and the associahedron graph $\mathcal{A}_n$ for $n\geq 5$. The chromatic number of the associahedron graph $\mathcal{A}_n$ is obtained by computer in \cite{FFHHUW} and equals $3$ for $5\leq n\leq 9$ and $4$ when $n=10$. We have confirmed these computations. In \cite{FFHHUW}, the authors conjecture that the chromatic number $\chi(\mathcal{A}_n)\rightarrow \infty$ as $n\rightarrow \infty$ and that $\chi(\mathcal{A}_n)=O(\log n)$. The second conjecture was proved recently by Addario-Berry, Reed, Scott, and Wood \cite{ABRSW}, but the first conjecture is still open. Since $\chi(\mathcal{A}_n)\geq 1+\frac{n-3}{|\lambda_{min}(\mathcal{A}_n)|}$ (see  \cite[Theorem 3.6.2]{BH} or  \cite{WHaemers}), proving that $|\lambda_{min}(\mathcal{A}_n)|=o(n)$ would imply the conjecture from \cite{FFHHUW}. 

In this paper, we show that this is not the case and actually $|\lambda_{min}(\mathcal{A}_n)|=\Theta(n)$. The graph $\mathcal{A}_n$ is an induced subgraph of the Johnson graph $J\left(\frac{n(n-3)}{2},n-3\right)$. The eigenvalues of the Johnson graph are known (see \cite{BCIM,Delsarte}). Using Loday \cite{Loday}, one can also observe that the graph $\mathcal{A}_n$ is an induced subgraph of the simplicial rook graph $SR\left(n-2,\binom{n-1}{2}\right)$ introduced by Martin and Wagner \cite{MW} (see also \cite{BCHV}). We have not been able to use these facts to calculate the eigenvalues of $\mathcal{A}_n$. Instead, we will use Corollary \ref{oddcycle} and Cauchy eigenvalue interlacing to prove the following results. For $n\geq 5$, we show that 
\begin{equation}\label{lowerbndassoc}
\lambda_{min}(\mathcal{A}_n)\geq \frac{-5-\sqrt{5}}{8}(n-3)-\frac{3-\sqrt{5}}{8}.
\end{equation}
Using eigenvalue interlacing and computations of the smallest eigenvalue of $\mathcal{A}_n$ for $n\leq 12$, we prove that
\begin{equation}\label{upperbndassoc}
\lambda_{min}(\mathcal{A}_n)\leq -0.6904n+c_r,
\end{equation}
where $c_r$ is some constant that depends on the value of the remainder $r$ of $n$ when divided by $10$. We also show that the limit $\lim_{n\rightarrow \infty} \frac{\lambda_{min}(\mathcal{A}_n)}{n-3}$ exists and 
\begin{equation}\label{limit}
-0.6904\geq \lim_{n\rightarrow \infty} \frac{\lambda_{min}(\mathcal{A}_n)}{n-3}\geq \frac{-5-\sqrt{5}}{8}\approx -0.9045.
\end{equation}

\section{Proof of Theorem \ref{decK}}\label{sec:oddcycle}

We will use the following lemma.
\begin{prop}\label{K}
Let $K=(V,E)$ be a $k$-regular graph. For any vector $x\in \mathbb{R}^V$,
\begin{equation*}
\sum_{ij\in E}(x_i+x_j)^2\geq (k+\lambda_{min}(K))\sum_{\ell\in V}x_{\ell}^2.
\end{equation*}
\end{prop}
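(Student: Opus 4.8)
The plan is to recognize the left-hand side as a quadratic form and identify the matrix that represents it. Expanding $\sum_{ij\in E}(x_i+x_j)^2 = \sum_{ij\in E}(x_i^2 + 2x_ix_j + x_j^2)$. In the first and third terms, each vertex $\ell$ contributes $x_\ell^2$ once for every edge incident to it, and since $K$ is $k$-regular there are exactly $k$ such edges; hence $\sum_{ij\in E}(x_i^2 + x_j^2) = k\sum_{\ell\in V} x_\ell^2$. The cross term gives $2\sum_{ij\in E} x_i x_j = x^\top A(K) x$. Therefore $\sum_{ij\in E}(x_i+x_j)^2 = x^\top (kI + A(K)) x$, i.e. the quadratic form associated to the matrix $Q = kI + A(K)$, which is exactly the \emph{signless Laplacian} of $K$.

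With this identification the inequality becomes $x^\top Q x \geq (k+\lambda_{min}(K))\, x^\top x$ for all $x\in\mathbb{R}^V$. This is just the Rayleigh-quotient characterization of the smallest eigenvalue of the symmetric matrix $Q$: we have $x^\top Q x \geq \lambda_{min}(Q)\, x^\top x$ for every $x$, and the eigenvalues of $Q = kI + A(K)$ are precisely $k + \mu$ as $\mu$ ranges over the eigenvalues of $A(K)$, so $\lambda_{min}(Q) = k + \lambda_{min}(K)$. Combining the two displayed facts finishes the proof.

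There is essentially no obstacle here: the only thing to be careful about is the bookkeeping in the expansion (making sure each vertex's square is counted with multiplicity equal to its degree, which is where $k$-regularity enters) and noting that the spectral bound $x^\top Q x \geq \lambda_{min}(Q)x^\top x$ holds for \emph{all} real vectors $x$, not merely unit vectors, which is what the statement requires. One could phrase the Rayleigh step either via the spectral decomposition of $Q$ (write $x$ in an orthonormal eigenbasis) or by citing the standard min-max principle; either way it is a one-line argument.
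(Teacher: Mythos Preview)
Your proof is correct and is essentially identical to the paper's own argument: both expand $\sum_{ij\in E}(x_i+x_j)^2$ as the quadratic form $x^{\top}(kI+A)x$ and then invoke the Rayleigh bound $x^{\top}Ax\geq \lambda_{min}(K)\,x^{\top}x$. You simply spell out the bookkeeping in the expansion and name $kI+A$ as the signless Laplacian, while the paper states the identity in one line.
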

\begin{proof}
Let $A$ be the adjacency matrix of $K$.
Because $x^TAx\geq \lambda_{min}x^Tx$, it follows that 
$$
\sum_{ij\in E}(x_i+x_j)^2=x^T(kI+A)x\geq (k+\lambda_{min}(K))\sum_{\ell\in V}x_{\ell}^2.
$$\end{proof}
We now give the proof of Theorem \ref{decK}.

\begin{proof} Let $x$ be an eigenvector of euclidean norm one corresponding to $\lambda_{min}(G)$. If $A$ denotes the adjacency matrix of $G$, then 
\begin{equation*}
d+\lambda_{min}(G) =x^T(dI+A)x=\sum_{uv\in E(G)}(x_u+x_v)^2. 
\end{equation*}
For each edge $uv$, let $c_{uv}$ denote the number of copies of $K$ from $\mathcal{K}$ that contain $uv$. From our hypothesis, $c_{uv}\leq t$. For a vertex $w$, let $c_{w}$ denote the number of copies of $K$ from $\mathcal{K}$ containing $w$. Then $c_{w}\geq m$. For $H\in \mathcal{K}$, denote
$$\sigma(H)=\sum_{uv\in E(H)}(x_u+x_v)^2.$$ 
Proposition \ref{K} implies that
\begin{equation*}
\sigma(H)\geq (k+\lambda_{min}(K))\sum_{w\in V(H)}x_w^2.
\end{equation*}
Summing up over all the graphs in $\mathcal{K}$, we get that
\begin{align*}
\sum_{H\in \mathcal{K}}\sigma(H)&\geq (k+\lambda_{min}(K))\sum_{H \in \mathcal{K}}\sum_{w\in V(H)}x_w^2=
(k+\lambda_{min}(K))\sum_{w\in V(G)}c_wx_w^2\\
&\geq (k+\lambda_{min}(K))m.
\end{align*}
On the other hand,
\begin{align*}
\sum_{H\in \mathcal{K}}\sigma(H)&=\sum_{H\in \mathcal{K}}\sum_{uv\in E(H)}(x_u+x_v)^2=\sum_{uv\in E(G)}c_{uv}(x_u+x_v)^2\\
&\leq t\sum_{uv\in E(G)}(x_u+x_v)^2=t(d+\lambda_{min}(G)).
\end{align*}
Combining these last two inequalities gives the desired result.
\end{proof}
Corollary \ref{oddcycle} follows by taking $K=C_{2r+1}$. Taking $K=K_3, t=d-1$, one gets Theorem 1.1 from \cite{AAB} restricted to regular graphs.

\section{Proof of inequality \eqref{lowerbndassoc}}\label{sec:assoc1}

The graph $\mathcal{A}_n$ does not contain any triangles, but it contains cycles of length $5$ and we take advantage of this fact and use Corollary \ref{oddcycle} to obtain a lower bound for $\lambda_{min}(\mathcal{A}_n)$. First we need to show that each vertex of $\mathcal{A}_n$ is contained in at least $n-4$ cycles of length $5$ and each edge is contained in at most $4$ cycles of length $5$. 

Let $T$ be a vertex of $\mathcal{A}_n$. It corresponds to a triangulation of the $n$-gon into $n-2$ triangles using $n-3$ non-crossing diagonals. A triangle from this triangulation is called an ear if two of its sides are the sides of the $n$-gon and is called interior if all its sides are diagonals. We can associate a tree to $T$ as follows: the vertices correspond to the triangles of $T$ and two triangles are adjacent if and only if  they share one side/diagonal (see \cite[Theorem 1.5.1]{St}). We observe that this tree has $n-2$ vertices and each vertex of it has degree $1, 2$, or $3$. Vertices of degree one correspond to the ears of the triangulation and vertices of degree three correspond to the interior triangles. 
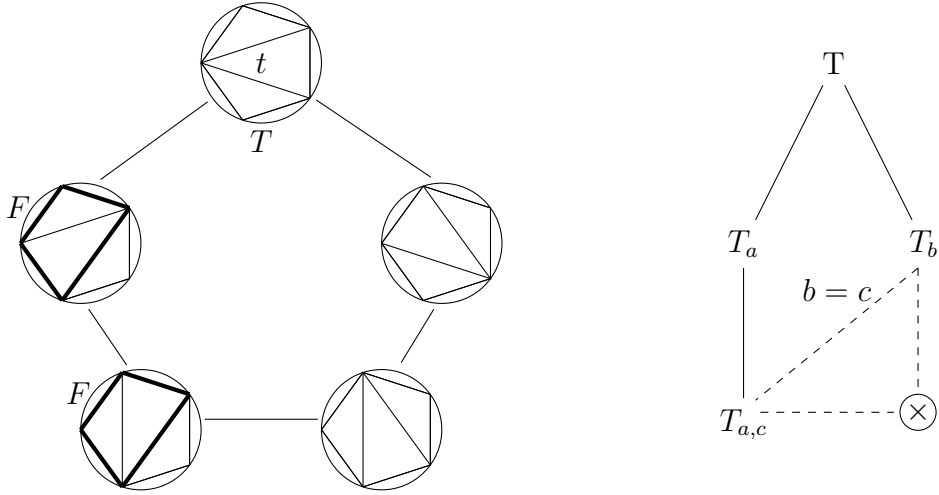
\begin{figure}
  \begin{minipage}{.5\textwidth}
    \centering
\begin{tikzpicture}[scale = 0.8]
\draw (0,0) circle[radius=1];
\draw (36:1 cm) -- (180:1 cm)--(324:1);
 \foreach \x in {-36,36,...,252} {
        \draw[fill] (\x:1 cm) -- (\x + 72:1 cm);}   
        
\begin{scope}[xshift = -3 cm, yshift = -3cm]
\draw (0,0) circle[radius=1];
\draw (36:1 cm) -- (180:1 cm);
\draw [ultra thick] (36:1cm) -- (252:1cm);
 \foreach \x in {-36,36,...,252} {
        \draw[fill] (\x:1 cm) -- (\x + 72:1 cm);}
\foreach \aa in {36, 108, 180}{
 \draw[ultra thick] (\aa:1cm)--(\aa+72:1cm);
}
\draw node at (150: 1.2){$F$};
        \end{scope}
        
\begin{scope}[xshift = 3 cm, yshift = -3cm]
\draw (0,0) circle[radius=1];
\draw (180:1 cm)--(324:1)--(108:1cm);
 \foreach \x in {-36,36,...,252} {
        \draw[fill] (\x:1 cm) -- (\x + 72:1 cm);}
        \end{scope}
        
\begin{scope}[xshift = -2 cm, yshift = -6.1cm]
\draw (0,0) circle[radius=1];
\draw[ultra thick] (36:1cm) -- (252:1cm);
\draw (252:1cm)--(108:1cm);
 \foreach \x in {-36,36,...,252} {
        \draw[fill] (\x:1 cm) -- (\x + 72:1 cm);}
        \foreach \aa in {36, 108, 180}{
 \draw[ultra thick] (\aa:1cm)--(\aa+72:1cm);
}
\draw node at (150:1.2){$F$};
        \end{scope}
        
\begin{scope}[xshift = 2 cm, yshift = -6.1cm]
\draw (0,0) circle[radius=1];
\draw (252:1cm)--(108:1cm)--(-36:1cm);
 \foreach \x in {-36,36,...,252} {
        \draw[fill] (\x:1 cm) -- (\x + 72:1 cm);}
        \end{scope}

\draw (216:1.1)--(216:3.3);
\draw (-34:1.1)--(-34:3.4);
\draw (235:5) -- (246:5.5);
\draw(-55:5) -- (-65:5.5);
\draw(-81:6) -- (261:6);
\draw node at (-90:1.3) {$T$};
\draw node at (0,0){$t$};
\end{tikzpicture}
\end{minipage}
  \begin{minipage}{.5\textwidth}
    \centering
 \begin{tikzpicture}[scale =1.6]
    \node {T}
    child{node {$T_a$}
    child{node{$T_{a,c}$}}}
    child{node {$T_b$}};
\draw[dashed] (0.7,-1.7)--node[above=0.3]{$b=c$} (-0.65,-2.8);
\draw [dashed](0.7,-1.7)--(0.7, -2.7);
\draw[dashed] (-0.61, -2.9) -- (0.5, -2.9);
\draw (0.7,-2.9) circle[radius = 0.15];
\draw node at (0.7,-2.9){$\times$};
\end{tikzpicture}
 \end{minipage}
 \caption{Left: a 5-cycle in $\mathcal{A}_n$ containing $T$. We depict the $n$-gon as a circle for simplicity. Right: there can not be a 5-cycle in $\mathcal{A}_n$ which is not of the form as shown in figure on the left.}
\label{fig1} 
\end{figure}
\begin{lemma}\label{pentagon}
 A cycle of length $5$ in $\mathcal{A}_n$ that contains a triangulation $T$, corresponds to two incident diagonals of $T$ whose removal creates a pentagon in $T$.   
\end{lemma}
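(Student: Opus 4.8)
The plan is to walk around the $5$-cycle, record which diagonal is flipped at each step, and show that the two diagonals flipped at $T$ must be two sides of a common triangle of $T$; their removal then merges three triangles into a pentagon, which is the assertion. Write the cycle as $T\to T_a\to T_{a,c}\to T_3\to T_b\to T$, where $T_a=T\setminus a\cup a'$ is obtained by flipping the diagonal $a$ of $T$ (so $a'$ is the new diagonal, the second diagonal of the quadrilateral $Q_a$ cut out by the two triangles of $T$ on $a$), similarly $T_b=T\setminus b\cup b'$ with quadrilateral $Q_b$, and $T_{a,c}=T_a\setminus c\cup c'$ is obtained from $T_a$ by flipping $c$. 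I would use two facts throughout: that $\mathcal{A}_n$ has no triangle, and the elementary observation that the flip $d'$ of a diagonal $d$ of a triangulation $U$ crosses $d$ and is compatible with every other diagonal of $U$ (since $U\setminus d\cup d'$ is a triangulation again). Routine reductions come first: $a\neq b$ (else $T_a=T_b$), $c\neq a'$ (else $T_{a,c}=T$), and $c'\neq a$ (else $T_{a,c}=T\setminus c\cup a'$ is adjacent to $T$, making $T,T_a,T_{a,c}$ a triangle); hence $a,c\in T$ while $a',b',c'\notin T$, and $T_{a,c}$ differs from $T$ in exactly the four diagonals $a,c,a',c'$.

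The engine of the argument is the inequality $|T_{a,c}\,\triangle\,T_b|\le 4$: on the cycle $T_{a,c}$ and $T_b$ are joined by the length-$2$ path through $T_3$, and they cannot be adjacent (that would make $T_{a,c},T_3,T_b$ a triangle), so they are at distance $2$ and differ in at most $4$ diagonals. Since $T_{a,c}\,\triangle\,T_b=\{a,c,a',c'\}\,\triangle\,\{b,b'\}$, the set $\{b,b'\}$ must meet $\{a,c,a',c'\}$; as $b\in T$ and $a',c'\notin T$ this rules out $b\in\{a',c'\}$, and as $b'\notin T$ it rules out $b'\in\{a,c\}$, leaving the alternatives $b=c$ or $b'\in\{a',c'\}$. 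If $b'=a'$ then $T_a$ and $T_b$ differ only in $\{a,b\}$, hence are adjacent, and $T_a,T,T_b$ is a triangle --- impossible. If $b'=c'$, then the diagonal $b'=c'$ is at once the flip of $b$ in $T$ and the flip of $c$ in $T_a$; as the flip of $b$ in $T$ it crosses no diagonal of $T$ other than $b$, yet as the flip of $c$ in $T_a$ it crosses $c$, and $c\in T$, forcing $c=b$. So in every case $b=c$.

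It remains to show that when $b=c$ the quadrilaterals $Q_a$ and $Q_b$ share a triangle of $T$. Suppose not: then deleting $a$ and inserting $a'$ does not touch either triangle of $T$ on $b$, so the flip of $b=c$ inside $T_a$ is the same diagonal $b'$ as its flip inside $T$; therefore $T_{a,c}=T\setminus\{a,b\}\cup\{a',b'\}$, which differs from $T_b$ only in $\{a,a'\}$, so $T_{a,c}\sim T_b$ and $T_{a,c},T_3,T_b$ is a triangle --- contradiction. Hence $Q_a$ and $Q_b$ share a triangle $\Delta$ (and they are distinct quadrilaterals, for $Q_a=Q_b$ would give $\{a,a'\}=\{b,b'\}$ and then, since $a\neq b$, the impossible equality $a'=b\in T$). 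A triangle lying in both $Q_a$ and $Q_b$ has both $a$ and $b$ among its three sides, so $a$ and $b$ meet at a vertex; and deleting $a$ and $b$ from $T$ fuses the three distinct triangles on $a$ and on $b$ into a single region with $3+3+3-2-2=5$ sides, a pentagon. That is exactly the pair of incident diagonals of $T$ whose removal creates a pentagon, as claimed (and conversely each such pair gives the $5$-cycle consisting of the five triangulations of that pentagon).

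I expect the delicate point to be the comparison of the flip of a fixed diagonal inside $T$ with its flip inside $T_a$: one must verify that this flip is unchanged exactly when no triangle of $T$ carries both $a$ and the diagonal in question, and one must check that the coincidences among $a,a',b,b',c,c'$ eliminated above really exhaust the cases. Keeping careful track of which diagonals are ``old'' (in $T$) and which are ``new'' after each flip is what lets these small case distinctions close up.
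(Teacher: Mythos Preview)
Your proof is correct and follows essentially the same route as the paper's: label the flips around the $5$-cycle, exploit triangle-freeness of $\mathcal{A}_n$, and reduce to showing that the two diagonals flipped at $T$ share a triangle. Your treatment is in fact more thorough than the paper's, which simply asserts that when $b\neq c$ no triangulation can be adjacent to both $T_b$ and $T_{a,c}$; you justify this via the symmetric-difference bound $|T_{a,c}\,\triangle\,T_b|\le 4$ and a clean elimination of the possible coincidences among $a,a',b,b',c,c'$.
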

\begin{proof}
  To see this, we take a triangle, say $t$, in $T$ of degree 2 or 3. Then by flipping the incident diagonals, as shown in Figure \ref{fig1} (left), we get a 5-cycle in $\mathcal{A}_n$. Now suppose there is 5-cycle in $\mathcal{A}_n$ containing $T$ that is not of this form. Let $T_a$, $T_b$ be the neighbors of $T$ in the 5-cycle, where $T_i$ denotes the triangulation we get after flipping the diagonal $i$ of $T$ and let $T_{a,c}\sim T_a$ in the 5-cycle, see Figure \ref{fig1} (right). If $b =c$, then $T_{a,b}\sim T_b$, but since $\mathcal{A}_n$ does not contain a triangle, we get a contradiction. If $b\neq c$, then there can not exist a triangulation which is adjacent to both $T_b$ and $T_{a,c}.$    
\end{proof}
\begin{prop}\label{vertexc5}
Let $n\geq 5$ and $T$ be a vertex/triangulation of $\mathcal{A}_n$. If $t_1$ equals the number of ears of $T$, then $T$ is contained in $n-6+t_1\geq n-4$ cycles of length $5$ in $\mathcal{A}_n$.
\end{prop}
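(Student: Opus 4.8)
The plan is to count the 5-cycles through $T$ by transferring the problem to the tree $\tau$ associated to $T$ described just above (its vertices are the $n-2$ triangles of $T$, two being adjacent when they share a diagonal). I claim the 5-cycles of $\mathcal A_n$ through $T$ are in bijection with the paths on three vertices (``cherries'') of $\tau$, and that $\tau$ has exactly $n-6+t_1$ of them.

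For the bijection, I would first upgrade Lemma~\ref{pentagon} to a genuine bijection between 5-cycles through $T$ and pairs $\{d_1,d_2\}$ of incident diagonals of $T$ whose removal creates a pentagon $\Pi\subseteq P$: given such a pair, the five triangulations of $\Pi$ (leaving the rest of $T$ fixed) form a copy of $\mathcal A_5\cong C_5$ through $T$, and conversely $\Pi$ is precisely the region in which the triangulations along a given 5-cycle differ, so the 5-cycle determines $\Pi$ and hence $\{d_1,d_2\}$. Then I would match each cherry $t'-t''-t'''$ of $\tau$ with the pair consisting of the diagonal shared by $t',t''$ and the diagonal shared by $t'',t'''$: these are two distinct sides of the triangle $t''$, hence incident, and $t'\cup t''\cup t'''$ has $2+1+2=5$ sides, so it is a pentagon whose only interior diagonals are exactly those two. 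Conversely, if removing incident diagonals $d_1,d_2$ opens a pentagon, that pentagon is a union of exactly three triangles of $T$ triangulated by $d_1$ and $d_2$, and the (unique) triangle having both $d_1,d_2$ as sides is the middle vertex of the corresponding cherry; the two assignments are mutually inverse. Hence the number of 5-cycles through $T$ equals $\sum_{v\in\tau}\binom{\deg v}{2}$.

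It remains to evaluate this sum. Every vertex of $\tau$ has degree $1$, $2$, or $3$; let $n_i$ be the number of vertices of degree $i$. Since $\tau$ is a tree with $n-2$ vertices and $n-3$ edges, $n_1+n_2+n_3=n-2$ and $n_1+2n_2+3n_3=2(n-3)$, while by definition $n_1=t_1$. Solving gives $n_3=t_1-2$ and $n_2=n-2t_1$, so
\begin{equation*}
\sum_{v\in\tau}\binom{\deg v}{2}=n_2+3n_3=(n-2t_1)+3(t_1-2)=n-6+t_1.
\end{equation*}
Finally, $\tau$ is a tree on $n-2\ge 3$ vertices and therefore has at least two leaves, i.e. $t_1\ge 2$, which yields $n-6+t_1\ge n-4$.

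I expect the only real obstacle to be the careful bookkeeping in the bijection, in particular checking that a 5-cycle through $T$ is supported on a single sub-pentagon (so that it unambiguously recovers $\{d_1,d_2\}$) and that distinct cherries produce distinct sub-pentagons and hence distinct 5-cycles. The degree computation and the fact $t_1\ge 2$ (every polygon triangulation has at least two ears, equivalently every tree with at least two vertices has two leaves) are routine.
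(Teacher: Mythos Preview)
Your proposal is correct and follows essentially the same route as the paper: both reduce to counting the cherries (paths on three vertices) in the dual tree via $\sum_v\binom{\deg v}{2}$, then solve for $t_2,t_3$ from the vertex and handshake identities to obtain $n-6+t_1$, and finish with $t_1\ge 2$. If anything, you are more explicit than the paper in spelling out the bijection between 5-cycles through $T$ and cherries of $\tau$, which the paper just reads off Lemma~\ref{pentagon}.
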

\begin{proof}
Denote by $G=(V,E)$ the tree associated with $T$. For $j\in \{1,2,3\}$, let $t_j$ denote the number of vertices of degree $j$ in $G$. Since there are $n-2$ triangles in $T$, $t_1+t_2+t_3=n-2$. The Handshaking Lemma implies that $t_1+2t_2+3t_3=2(n-3)$. Therefore, $t_3=t_1-2$ and $t_2=n-2t_1$.


By Lemma \ref{pentagon}, the number of cycles of length $5$ containing $T$ equals $\sum_{v\in V}\binom{d_v}{2}$, where $d_v$ denotes the degree of the vertex $v$ in $G$. It is not hard to see that the previous expression is the same as $t_2+3t_3=n-2t_1+3(t_1-2)=n-6+t_1$. Since $G$ is a tree, it has at least two leaves and therefore $t_1\geq 2$. This finishes our proof.
\end{proof}

\begin{prop}\label{edgec5}
Let $n\geq 5$. If $T$ and $T'$ are two adjacent vertices in $\mathcal{A}_n$, then the edge $TT'$ is contained in at least one and at most four cycles of length $5$ in $\mathcal{A}_n$.
\end{prop}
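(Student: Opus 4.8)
Let me think about the structure here. We have two adjacent triangulations $T$ and $T'$, differing by a single flip: there's a quadrilateral $Q$ (formed by two triangles of $T$ sharing a diagonal $\delta$) whose diagonal $\delta$ in $T$ is replaced by the other diagonal $\delta'$ in $T'$. So $T$ and $T'$ share $n-4$ diagonals, and these $n-4$ common diagonals are exactly the diagonals of $T$ other than $\delta$, equivalently the diagonals of $T'$ other than $\delta'$.

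Now, by Lemma \ref{pentagon}, a 5-cycle through $T$ corresponds to a pentagon in $T$ obtained by removing two incident diagonals of $T$. The 5-cycle is: $T \to T_a \to T_{a,c} \to T_{b,c} \to T_b \to T$ where $a, b$ are the two incident diagonals removed (forming the pentagon), and... actually let me reconsider. Removing two incident diagonals $a,b$ from $T$ creates a pentagon $P$ (a pentagonal region); there are exactly 5 ways to triangulate a pentagon, and these 5 triangulations form a 5-cycle in the associahedron. So the 5-cycle through $T$ is: take the pentagonal region $P$ in $T$ (triangulated one way by diagonals $a,b$), and the 5-cycle consists of the 5 triangulations of $P$, each combined with the fixed triangulation of the rest of the $n$-gon.

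So a 5-cycle through the **edge** $TT'$ corresponds to a pentagon $P$ (= two incident diagonals removed) such that both $T$ and $T'$ restrict to triangulations of $P$ that are adjacent in the pentagon's 5-cycle. The flip from $T$ to $T'$ is the flip of $\delta$. For this flip to be "inside $P$", we need $\delta$ to be one of the two diagonals of $P$ in $T$'s triangulation, and $\delta'$ one of the diagonals of $P$ in $T'$'s triangulation — i.e., $\delta$ is one of the two incident diagonals removed to form $P$, OR... hmm, wait. The two incident diagonals removed form the pentagon, and $\delta$ must be one of those two. Let me reconsider: $P$ is a pentagonal region with 5 boundary edges; its triangulation in $T$ uses 2 diagonals; at least one of these 2 diagonals must be $\delta$ (so that flipping $\delta$ stays within $P$). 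Since the pentagon has 5 triangulations and each flip changes exactly one diagonal, yes: $P$ must contain $\delta$ among its 2 diagonals in $T$.

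Here is my plan. First, observe $\delta$ is incident to the 4 vertices... no, $\delta$ is a diagonal with 2 endpoints; in $T$ it lies in the quadrilateral $Q$ and is the common edge of two triangles $t_1, t_2$. The other diagonal of a pentagon $P$ containing $\delta$ must be a diagonal of $T$ incident to $\delta$ (sharing an endpoint with it) — call it $e$. For the pentagon to actually be a pentagon, $e$ together with $\delta$ must cut out a 5-gon, which happens precisely when $e$ is an edge of one of the two triangles $t_1, t_2$ adjacent to $\delta$ in $T$ — i.e., $e \in \{$ the 4 sides of $Q$ that are diagonals of $T\}$. So the candidate pentagons containing $\delta$ correspond to the diagonals among the (at most 4) sides of the quadrilateral $Q$, giving at most 4 such pentagons, hence the upper bound of 4. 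For each such pentagon $P$, I must check that $T$ and $T'$ are genuinely adjacent among the 5 triangulations of $P$, which they are since they differ by exactly the flip of $\delta$ and $\delta$ is a diagonal of $P$ — so this pentagon really does give a 5-cycle through the edge $TT'$. For the lower bound of 1: the quadrilateral $Q$ has 4 sides; at least one of them is a diagonal of the $n$-gon (when $n \geq 5$, not all 4 sides of $Q$ can be sides of the $n$-gon, since $Q$ has only 4 vertices while the $n$-gon has $n \geq 5$), hence at least one pentagon exists, giving at least one 5-cycle.

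The main obstacle I expect is bookkeeping the degenerate cases carefully: I must verify that distinct diagonal-sides $e$ of $Q$ give distinct 5-cycles (two incident diagonals $\{\delta, e\}$ vs $\{\delta, e'\}$ determine different pentagons hence different 5-cycles, as the pentagonal region differs), and that every 5-cycle through $TT'$ arises this way (this is where I use Lemma \ref{pentagon} applied to $T$: the pentagon must contain $\delta$, and its second diagonal must be incident to $\delta$ and bound a pentagon with it, forcing it to be a side of $Q$). I also need the elementary geometric fact that two incident diagonals of a triangulation cut out a pentagonal region if and only if the "middle" triangle between them (the triangle having both as sides) exists in $T$ — equivalently the second diagonal is a side of a triangle adjacent to $\delta$. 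Once these are pinned down, counting the diagonal-sides of the quadrilateral $Q$ (between $1$ and $4$ of them) gives exactly the claimed bounds.
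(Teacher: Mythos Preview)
Your proposal is correct and follows essentially the same approach as the paper: identify the quadrilateral $Q$ (the paper calls it $F$) determined by the flip, and argue that the 5-cycles through the edge $TT'$ correspond bijectively to those sides of $Q$ that are diagonals of the $n$-gon rather than sides of it, of which there are at least one and at most four. Your write-up is more detailed---you explicitly invoke Lemma~\ref{pentagon} to justify that every 5-cycle through the edge arises this way, and you spell out why distinct sides give distinct 5-cycles---whereas the paper handles these points tersely via a reference to Figure~\ref{fig1}.
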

\begin{proof}
Because $T$ and $T'$ are adjacent in $\mathcal{A}_n$, they have $n-4$ diagonals in common. Consider the two diagonals from the symmetric difference of $T$ and $T'$. They are the diagonals of a $4$-gon $F$. A cycle of length $5$ containing the edge $TT'$ in $\mathcal{A}_n$ corresponds to a side of $F$ (see Figure \ref{fig1}) that is a diagonal of $T\cap T'$ (or equivalently, not a side of the $n$-gon $P$). The $4$-gon $F$ can have at least one and at most four such sides. This finishes our proof.
\end{proof}

Combining Theorem \ref{oddcycle} with Proposition \ref{vertexc5} and Proposition \ref{edgec5}, we obtain inequality \eqref{lowerbndassoc}.

\section{Proof of inequality \eqref{upperbndassoc}}\label{sec:upperbnd}

We start with a simple observation.
\begin{prop}
If $k,\ell\geq 4$, then 
\begin{equation}
\lambda_{min}(\mathcal{A}_{k+\ell})\leq \lambda_{min}(\mathcal{A}_k)+\lambda_{min}(\mathcal{A}_{\ell}).
\end{equation}
\end{prop}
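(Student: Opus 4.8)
The plan is to exhibit, inside $\mathcal{A}_{k+\ell}$, an induced subgraph that is a disjoint union of copies of $\mathcal{A}_k$ and $\mathcal{A}_\ell$ — or, more precisely, an induced subgraph whose adjacency matrix is block-diagonal with one block equal to $A(\mathcal{A}_k)$ and one block equal to $A(\mathcal{A}_\ell)$ — and then apply Cauchy interlacing. Since the smallest eigenvalue of a block-diagonal matrix is the minimum of the smallest eigenvalues of the blocks, and interlacing forces $\lambda_{min}(\mathcal{A}_{k+\ell})$ to be at most the smallest eigenvalue of any induced subgraph, this would give $\lambda_{min}(\mathcal{A}_{k+\ell}) \le \min\{\lambda_{min}(\mathcal{A}_k), \lambda_{min}(\mathcal{A}_\ell)\}$, which is weaker than what we want. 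To get the \emph{additive} bound I instead need to produce an induced subgraph on which the restricted quadratic form splits as a sum, and test it against the tensor/sum of the two optimal eigenvectors.

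The construction is the natural one: fix a vertex (a side or a non-crossing chord) of the convex $(k+\ell)$-gon $P$ that cuts it into a $(k{+}1)$-gon $P_1$ and an $(\ell{+}1)$-gon $P_2$ sharing the chord $e$. Every triangulation of $P$ that uses the chord $e$ restricts to a triangulation of $P_1$ together with a triangulation of $P_2$, and conversely; moreover a flip of such a triangulation either stays within $P_1$ (keeping $e$ and the $P_2$-part fixed) or stays within $P_2$. Hence the set $S$ of triangulations of $P$ containing $e$ induces in $\mathcal{A}_{k+\ell}$ exactly the Cartesian product graph $\mathcal{A}_{k+1}' \mathbin{\square} \mathcal{A}_{\ell+1}'$, where $\mathcal{A}_{k+1}'$ is the flip graph of the $(k{+}1)$-gon with the chord $e$ frozen — which is isomorphic to $\mathcal{A}_{k+1}$ on a $(k{+}1)$-gon — so $S$ induces $\mathcal{A}_{k+1} \mathbin{\square} \mathcal{A}_{\ell+1}$... but this is off by one in the indices. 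I would therefore instead freeze a \emph{triangle} (three consecutive vertices of $P$, forming an ear) rather than a chord: choosing an ear on $k-1$ of the vertices and the complementary structure on $\ell+1$ vertices, and iterating, the clean statement to target is that the triangulations of $P$ that contain a fixed diagonal splitting $P$ into a $k$-gon and an $\ell$-gon induce a copy of $\mathcal{A}_k \mathbin{\square} \mathcal{A}_\ell$ inside $\mathcal{A}_{k+\ell-2}$; shifting $k \mapsto k+1$, $\ell \mapsto \ell+1$ recovers exactly the claimed inequality, and the eigenvalues of a Cartesian product are the sums $\mu_i + \nu_j$ of eigenvalues of the factors, so $\lambda_{min}(\mathcal{A}_k \mathbin{\square} \mathcal{A}_\ell) = \lambda_{min}(\mathcal{A}_k) + \lambda_{min}(\mathcal{A}_\ell)$.

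So the three steps are: (1) identify the correct splitting chord so that the index arithmetic $k + \ell$ comes out right — this is purely bookkeeping about how many polygon vertices each sub-polygon has once the shared chord is counted; (2) verify that the induced subgraph of $\mathcal{A}_{k+\ell}$ on the triangulations using that chord is genuinely the Cartesian product $\mathcal{A}_k \mathbin{\square} \mathcal{A}_\ell$ — one must check both that every product edge is an edge of $\mathcal{A}_{k+\ell}$ (a flip on one side is a legal flip of the whole triangulation) and that there are no extra edges (a single flip cannot simultaneously change both sides, and cannot remove the shared chord without leaving the vertex set, since flipping the shared chord produces a triangulation not in our set); (3) invoke Cauchy interlacing together with the product-spectrum formula. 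The main obstacle is step (2), and within it the "no extra edges" direction: I must argue carefully that among triangulations containing the fixed chord $e$, two are adjacent in $\mathcal{A}_{k+\ell}$ iff they differ by a flip not involving $e$, which amounts to observing that $e$ lies on the boundary between the two sub-polygons and the quadrilateral created by deleting $e$ would involve vertices from both sides, so flipping $e$ always leaves the set — hence adjacency within the set is exactly product adjacency. Once that is pinned down, the spectral conclusion is immediate and the index shift gives the stated proposition.
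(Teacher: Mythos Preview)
Your approach is essentially the same as the paper's: fix a diagonal, observe that the triangulations containing it induce a Cartesian product of two smaller associahedron graphs, use the spectrum formula for $\Box$, and then apply Cauchy interlacing. The verification you outline in step~(2) is correct and matches the paper's (unstated but implicit) reasoning.

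However, your step~(1) bookkeeping contains a real error, and it is exactly the point you flagged as ``purely bookkeeping''. A diagonal of a convex $(k{+}\ell)$-gon splits it into an $a$-gon and a $b$-gon with $a+b=k+\ell+2$, so the induced product is $\mathcal{A}_a\,\Box\,\mathcal{A}_b$ with $a+b=k+\ell+2$; there is \emph{no} choice of chord that yields $\mathcal{A}_k\,\Box\,\mathcal{A}_\ell$ inside $\mathcal{A}_{k+\ell}$. Your attempted fix---``shifting $k\mapsto k+1$, $\ell\mapsto\ell+1$ recovers exactly the claimed inequality''---does not: it gives $\lambda_{min}(\mathcal{A}_{k+\ell})\le \lambda_{min}(\mathcal{A}_{k+1})+\lambda_{min}(\mathcal{A}_{\ell+1})$, which is not the statement to be proved.

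The missing ingredient, which the paper supplies, is the monotonicity $\lambda_{min}(\mathcal{A}_{n+1})\le \lambda_{min}(\mathcal{A}_n)$, obtained by noting that $\mathcal{A}_n$ is an induced subgraph of $\mathcal{A}_{n+1}$ (fix an ear) and applying interlacing once more. With this, either your inequality above or the paper's version $\lambda_{min}(\mathcal{A}_{k+\ell})\le \lambda_{min}(\mathcal{A}_k)+\lambda_{min}(\mathcal{A}_{\ell+2})$ immediately yields the proposition. You should add this step explicitly; without it the argument does not close.
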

\begin{proof}
Recall that the Cartesian or box product $H\Box K$ of two graphs $H=(V,E)$ and $K=(W,F)$ has vertex set $V\times W$ and $(a_1,b_1)$ is adjacent to $(a_2,b_2)$ if $a_1\sim a_2$ and $b_1=b_2$ or $a_1=a_2$ and $b_1\sim b_2$. The adjacency matrix of $H\Box K$ equals $A(H)\otimes I_W+I_V\otimes A(K)$ and therefore the eigenvalues of $A(H\Box K)$ are of the form $\theta+\tau$, where $\theta$ is an eigenvalue of $A(H)$ and $\tau$ is an eigenvalue of $A(K)$. In particular, the smallest eigenvalue of $H\Box K$ equals $\lambda_{min}(H)+\lambda_{min}(K)$.

Recall that the vertices of the $k+\ell$-gon $P$ are labeled $1,2,\dots, k+\ell$ in clockwise direction. Consider the subgraph induced by the triangulations containing the diagonal connecting vertex $1$ to vertex $k$. This subgraph is isomorphic to $\mathcal{A}_k\Box \mathcal{A}_{\ell+2}$. Using Cauchy eigenvalue interlacing (see \cite{BH} for example) and the previous paragraph, we deduce that $\lambda_{min}(\mathcal{A}_{k+\ell})\leq \lambda_{min}(\mathcal{A}_k)+\lambda_{min}(\mathcal{A}_{\ell+2})$. It is not too hard to see that $\lambda_{min}(\mathcal{A}_n)$ is decreasing with $n$ (use Cauchy interlacing and the fact that $\mathcal{A}_{n}$ is an induced subgraph of $\mathcal{A}_{n+1}$ for $n\geq 4$), we get the desired result.
\end{proof}

The Fekete/subadditivity lemma (see \cite{Fekete} or \cite[Lemma 11.6]{vLW}) now implies that the following limit exists: 
$$\lim_{n\rightarrow \infty}\frac{\lambda_{min}(\mathcal{A}_n)}{n}.$$ 

For $n\leq 12$, we computed below the smallest eigenvalue of $\mathcal{A}_n$ rounded up to the first three decimal points.

\begin{center}
\begin{tabular}{|c|c|c|c|c|c|c|c|c|}
\hline
$n-3$ & 2 & 3 & 4&5 & 6 & 7 & 8 & 9 \\
\hline
$\lambda_{min}$ & -1.618  & -2.414  & -3.177 &-3.912 & -4.667 &-5.409 &-6.157&-6.904\\
\hline
\end{tabular}
\end{center}

Let $n=10(k+1)+2$ for $k\geq 1$. Recall that the vertices of the $n$-gon $P$ are labeled $1,2,\dots, n$ in clockwise direction. Consider the subgraph induced by the triangulations containing the diagonal connecting vertex $1$ to vertex $12$. It is not too hard to see that this subgraph is isomorphic to the box product $\mathcal{A}_{12} \Box\mathcal{A}_{n-10}$. Using Cauchy interlacing and the previous paragraph, we have that
\begin{align*}
\lambda_{min}(\mathcal{A}_n)&\leq \lambda_{min}(\mathcal{A}_{12} \Box\mathcal{A}_{n-10})=\lambda_{min}(\mathcal{A}_{12})+\lambda_{min}(\mathcal{A}_{n-10})\\
& \leq -6.904+\lambda_{min}(\mathcal{A}_{n-10}).
\end{align*}
Repeating this argument for $n-10,n-20,\dots,22$, we get that 
\begin{equation*}
\lambda_{min}(\mathcal{A}_n)\leq -6.904\times \frac{n-2}{10}=-0.6904(n-2)=-0.6904n+1.3808.
\end{equation*}
Similar upper bounds can be obtained when $n=10(k+1)+r$ for other values of $r$ between $0$ and $9$. The results in this section and in the previous section imply the inequalities in \eqref{limit}.

\section{The second eigenvalue of $\mathcal{A}_n$}

Molloy, Reed, and Steiger \cite{MRS} studied the properties of the random walk on $\mathcal{A}_n$ in which one starts at a vertex and then selects a neighbor uniformly at random. These authors proved that for any subset $S\subset T_n$ with $|S|\leq |T_n|/2$, there is a matching between $S$ and its complement $\overline{S}$ having at least $\frac{|S||\overline{S}|}{|T_n|n^{11}}$ edges leading to a lower bound of $\frac{1}{2n^{12}}$ for the conductance of $\mathcal{A}_n$. Molloy, Reed, and Steiger proved that at least $\Omega(n^{3/2})$ and at most $\BigOh(n^{23}\log n)$ steps are sufficient to get close (within $\epsilon$ in variation distance) to the stationary distribution (which is the uniform distribution over the vertices of $\mathcal{A}_n$). McShine and Tetali \cite{McST} improved the upper bound to $\BigOh(n^5\log(n/\epsilon))$ and recently, Eppstein and Frishberg \cite{Eppstein} further improved the upper bound to $\BigOh(n^{4.75})$.

Denote $\lambda_2=\lambda_2(\mathcal{A}_n)$. For $\epsilon\in (0,1)$, let $\tau(\epsilon)$ denote the mixing time of the Markov chain on $\mathcal{A}_n$ (see \cite{McST}, \cite[p.61]{Sinclair}), then
\begin{equation}\label{eq7}
\frac{n-3}{n-3-\lambda_2}\log(C_{n-2}/\epsilon)\geq \tau(\epsilon)\geq \frac{\lambda_2}{2(n-3-\lambda_2)}.
\end{equation}
 
For $n\leq 12$, we computed the second eigenvalue of $\mathcal{A}_n$ rounded down to the first three decimal points.
\begin{center}
\begin{tabular}{|c|c|c|c|c|c|c|c|c|}
\hline
$n-3$ & 2 & 3 & 4&5 & 6 & 7 & 8 & 9 \\
\hline
$\lambda_2$ & 0.618 & 2 & 3.231 & 4.383 & 5.488 & 6.564 & 7.622 & 8.667\\
\hline
\end{tabular}
\end{center}

It seems that the second eigenvalue of $\mathcal{A}_n$ tends to $n-3$. Aldous \cite{Aldous1} proved the following result and the proof below is a reformulation due to Vishesh Jain.
\begin{theorem} There is a positive constant $c$ such that $\lambda_2(\mathcal{A}_n)\geq (n-3)-\frac{c}{\sqrt{n}}.$
\end{theorem}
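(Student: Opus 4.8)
The plan is to bound $\lambda_2(\mathcal{A}_n)$ from below using the variational characterization
$\lambda_2(\mathcal{A}_n) = \max\{\, x^T A x : \|x\|=1,\ x \perp \ones \,\}$,
by exhibiting a single well-chosen test vector $x$ (or a small family, taking the best). Since $\mathcal{A}_n$ is $(n-3)$-regular, this is equivalent to showing that for the chosen $x$ the quadratic form $\sum_{TT'\in E(\mathcal{A}_n)}(x_T-x_{T'})^2$ is at most $\frac{c}{\sqrt n}\,x^Tx$; that is, $x$ is a near-equitable ``almost-eigenvector'' whose Dirichlet energy is small relative to its norm. First I would look for a natural real-valued function on triangulations that changes by very little under a single diagonal flip. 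A flip modifies a triangulation only inside one quadrilateral, so any statistic that is a sum of local contributions over the $n-2$ triangles (or over the $n-3$ diagonals) will change by $O(1)$ under a flip, while typical values of such a statistic spread over a range of order $\sqrt n$ or $n$; this gap is exactly the $1/\sqrt n$ we are after.

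The key steps, in order: (1) Define a candidate function $f:T_n\to\mathbb{R}$, for instance $f(T)=\sum_{\{a,b\}\in T}\ell(a,b)$ where $\ell(a,b)$ is the length of the shorter arc between the endpoints $a,b$ of the diagonal along the boundary $n$-gon, or a related ``total diagonal length'' statistic; center it by subtracting its mean over $T_n$ to get $x\perp\ones$. (2) Show the Lipschitz bound: if $T\sim T'$ differ by one flip, then $|f(T)-f(T')|=O(1)$, hence $\sum_{TT'\in E}(x_T-x_{T'})^2 = O(1)\cdot |E(\mathcal{A}_n)| = O(1)\cdot (n-3)C_{n-2}$. (3) Show the variance lower bound: $\mathrm{Var}(f) = \frac{1}{C_{n-2}}\sum_T x_T^2 = \Omega(n\cdot C_{n-2})$, i.e. $f$ genuinely fluctuates on the order of $\sqrt n$ under the uniform measure on triangulations. (4) Combine: $\displaystyle (n-3)-\lambda_2(\mathcal{A}_n) \le \frac{\sum_{TT'\in E}(x_T-x_{T'})^2}{x^Tx} = \frac{O(1)\cdot (n-3)C_{n-2}}{\Omega(n\,C_{n-2})} = O\!\left(\frac{1}{\sqrt n}\cdot\sqrt n \cdot \frac{1}{\sqrt n}\right)$ — more carefully, choosing the normalization so the energy is $O(\mathrm{Var}(f))$ after rescaling gives the bound $(n-3)-\lambda_2 \le c/\sqrt n$; the arithmetic should be arranged so that the ratio of ``worst-case squared local increment $\times$ degree'' to ``variance'' comes out $\Theta(1/\sqrt n)$, which forces $f$ to have fluctuations of order exactly $\sqrt n$ rather than $n$ (so a statistic whose standard deviation is $\Theta(\sqrt n)$, not $\Theta(n)$, is the right choice).

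The main obstacle I expect is step (3): obtaining a clean \emph{lower} bound on $\mathrm{Var}(f)$ under the uniform distribution on triangulations of the $n$-gon. The Lipschitz estimate in step (2) is essentially automatic from locality of flips, but controlling the variance requires understanding the joint distribution of local features of a uniformly random triangulation — equivalently, using the bijection with binary trees / Catalan structures and known results on random binary trees (e.g. that the number of leaves, or the sum of subtree sizes, has variance $\Theta(n)$) to pin down the spread of whichever statistic we pick. An alternative route that sidesteps a delicate second-moment computation is to choose $f$ so that its fluctuation is visible from just two explicit triangulations: pick $T_0$ and $T_1$ with $f(T_0)-f(T_1)=\Theta(\sqrt n)$ but connected by a path of length $O(n)$ in $\mathcal{A}_n$ — however, this typically only yields a weaker bound, so the honest argument goes through the variance, and reconciling the power of $n$ there with the Lipschitz constant is the crux. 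I would also need to double-check the normalization bookkeeping so that the final constant $c$ is genuinely absolute and independent of $n$.
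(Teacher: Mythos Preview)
Your variational framework is correct and is exactly what the paper uses: exhibit a non-constant $f\colon T_n\to\mathbb{R}$ for which
\[
\frac{\mathbb{E}_{X_0,X_1}\big[(f(X_1)-f(X_0))^2\big]}{\operatorname{Var}_\pi(f)} \;=\; O(n^{-3/2}),
\]
with $(X_0,X_1)$ one step of the simple random walk started from stationarity. But the execution has a real gap. First, the statistic you propose, $f(T)=\sum_{\{a,b\}\in T}\ell(a,b)$, is \emph{not} $O(1)$-Lipschitz under a flip: a flip replaces one diagonal $ac$ of a quadrilateral $abcd$ by the other diagonal $bd$, and $|\ell(a,c)-\ell(b,d)|$ can be of order $n$, so step~(2) fails for this $f$. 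Second, even granting an $O(1)$-Lipschitz $f$, your own bookkeeping in step~(4) gives a Rayleigh quotient of order $(n-3)/\operatorname{Var}_\pi(f)$; to force this below $c/\sqrt{n}$ you need $\operatorname{Var}_\pi(f)=\Omega(n^{3/2})$, i.e.\ standard deviation $\Omega(n^{3/4})$, not $\Theta(\sqrt{n})$ as you write (that direction is backwards). The natural ``sum of local contributions'' statistics with $O(1)$ increments tend to have variance $\Theta(n)$, which only yields $(n-3)-\lambda_2=O(1)$.

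The idea you are missing, which the paper supplies following Aldous and Molloy--Reed--Steiger, is to choose $f$ so that \emph{most flips do not change it at all}. Take $f(T)$ to be the minimum distance from the fixed boundary point $p_{\lfloor n/4\rfloor}$ to a vertex of the \emph{central triangle} of $T$. All but three of the $n-3$ possible flips leave the central triangle (hence $f$) untouched, so the event $\mathcal{E}=\{\Delta f\neq 0\}$ has probability $\Theta(1/n)$; conditioned on $\mathcal{E}$, the computation below Equation~(4) in \cite{MRS} gives $\mathbb{E}[(\Delta f)^2\mid\mathcal{E}]=O(n^{3/2})$, so $\mathbb{E}[(\Delta f)^2]=O(\sqrt{n})$. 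Meanwhile $\operatorname{Var}_\pi(f)=\Theta(n^2)$ by symmetry. The ratio is then $O(n^{-3/2})$, giving $(n-3)-\lambda_2=O(n^{-1/2})$. The saving comes not from a small uniform Lipschitz constant but from the \emph{rarity} of flips that touch the feature $f$ tracks; any $f$ that is a sum over all diagonals is perturbed by every flip and cannot exploit this.
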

\begin{proof}
Note that the assertion is equivalent to the statement that the spectral gap $\gamma = 1-\lambda_{2}$ of the aforementioned random walk on $\mathcal{A}_{n}$ is $O(n^{-3/2})$. Let $\pi$ denote the uniform distribution on $\mathcal{A}_n$. By standard Markov chain theory (see, e.g., Lemma 13.7 in \cite{LP}), it suffices to exhibit a non-constant function $f : \mathcal{A}_{n} \to \mathbb{R}$ for which
\begin{align*}
    \frac{\mathbb{E}_{X_0, X_1}[(f(X_1) - f(X_0))^2]}{\text{Var}_{\pi}(f)} = O(n^{-3/2}),
\end{align*}
where $(X_0, X_1)$ are consecutive steps of the random walk on $\mathcal{A}_{n}$ with the initial state $X_0$ distributed according to $\pi$. 

This follows by a slight modification of the $\Omega(n^{3/2})$ lower bound on the mixing time of the random walk, due to Molloy, Reed, and Steiger \cite{MRS}; we refer the reader to Section 3 in their paper \cite{MRS} for the terminology used in the remainder of the proof. For $\tau \in \mathcal{A}_{n}$, let $f(\tau)$ denote the minimum distance between any vertex of the central triangle of $\tau$ and the point $p_{\lfloor n/4 \rfloor}$. By symmetry considerations, $\text{Var}_{\pi}(f) = \Theta(n^2)$, so it remains to show that
\[\mathbb{E}_{X_0, X_1}[(f(X_1) - f(X_0))^2] = O(\sqrt{n}).\]
For this, let $\mathcal{E}$ denote the event that one of the edges involved in the central triangle of $X_0$ is chosen to be flipped, and note that $f(X_1) - f(X_0) = 0$ on the complement of $\mathcal{E}$. Since $\mathbb{P}_{X_0,X_1}[\mathcal{E}] = \Theta(1/n)$, we have that
\begin{align*}
    \mathbb{E}_{X_0, X_1}[(f(X_1) - f(X_0))^2] 
    &= \mathbb{E}_{X_0, X_1}[(f(X_1) - f(X_0))^2 \mid \mathcal{E}]\cdot \mathbb{P}_{X_0, X_1}[\mathcal{E}]\\
    &= O(\mathbb{E}_{X_0, X_1}[(f(X_1) - f(X_0))^2 \mid \mathcal{E}] \cdot n^{-1})\\
    &= O(n^{3/2}\cdot n^{-1}),
\end{align*}
where the last line follows using the same computation as the one below Equation (4) in \cite{MRS}.
\end{proof}


Aldous \cite{Aldous1} conjectures  that the relaxation time of the random walk is $\BigOh(n^{3/2})$. This is equivalent to that there exists a positive constant $c'$ such that $\lambda_2(\mathcal{A}_n)\leq (n-3) - \frac{c'}{\sqrt{n}}.$ By (\ref{eq7}), we also get that the above random  walk mixes in  $O(n^{3/2}\log|\mathcal{A}_n|)$ which is $\BigOh(n^{2.5})$.

\section{Final remarks}\label{sec:final}

Our arguments in Section \ref{sec:oddcycle} and Section \ref{sec:assoc1} use the cycle $C_5$ since the associahedron graph $\mathcal{A}_5$ is isomorphic to $C_5$. A natural questions is to see what happens when $\mathcal{A}_5$ is replaced by $\mathcal{A}_6$. The graph $\mathcal{A}_6$ is $3$-regular and has the following eigenvalues (the exponents below are the multiplicities):
$$3^{(1)}, 2^{(2)}, \sqrt{3}^{(1)}, 0^{(2)}, (1-\sqrt{2}) ^{(3)}, -1^{(1)}, -\sqrt{3}^{(1)}, (-1-\sqrt{2})^{(3)}.$$
By a similar argument to Proposition \ref{vertexc5}, one can prove the following results.
\begin{prop}\label{vertexA6}
Let $n\geq 6$ and $T$ be a vertex of $\mathcal{A}_n$. 
The number of subgraphs of $\mathcal{A}_n$ that are isomorphic to $\mathcal{A}_6$ and contain $T$ equals the number of connected subgraphs with four vertices in the dual tree of $T$. 
\end{prop}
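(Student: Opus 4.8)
The plan is to set up a bijection between the $4$-vertex connected subtrees of the dual tree $G_T$ of $T$ and the subgraphs of $\mathcal{A}_n$ isomorphic to $\mathcal{A}_6$ that contain $T$, exactly paralleling the way Lemma~\ref{pentagon} and Proposition~\ref{vertexc5} treat the case $\mathcal{A}_5\cong C_5$ (there the $3$-vertex connected subtrees, equivalently the pentagonal sub-regions, play the role). The first ingredient is elementary: for a connected subtree $\sigma$ of $G_T$ with $j$ vertices, the union $R_\sigma$ of the $j$ triangles of $T$ indexed by $\sigma$ is a $(j+2)$-gon whose vertices lie among those of the $n$-gon $P$, and the $j-1$ diagonals of $T$ interior to $R_\sigma$ triangulate it. Indeed, the boundary of $R_\sigma$ has $3j-2(j-1)=j+2$ edges, and since the diagonals of $T$ are pairwise non-crossing inside a convex polygon, $R_\sigma$ is simply connected with a simple boundary cycle, hence a polygon in convex position. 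For $j=4$ this yields a hexagonal sub-region $H_\sigma$ of $T$ together with its three interior diagonals.

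For the easy inequality, fix a $4$-vertex connected subtree $\sigma$ and let $S_\sigma$ be the set of triangulations of $P$ that contain every diagonal of $T$ except the three interior to $H_\sigma$. Each element of $S_\sigma$ is obtained from $T$ by retriangulating $H_\sigma$ arbitrarily, so $|S_\sigma|=14=|V(\mathcal{A}_6)|$ and $T\in S_\sigma$; two elements of $S_\sigma$ are adjacent in $\mathcal{A}_n$ exactly when the triangulations they induce on $H_\sigma$ differ by one flip, so the subgraph of $\mathcal{A}_n$ induced on $S_\sigma$ is isomorphic to $\mathcal{A}_6$. Distinct subtrees give distinct sets, since $\bigcap_{U\in S_\sigma}U$ is precisely the set of diagonals of $T$ that are not interior to $H_\sigma$, which recovers $H_\sigma$ and hence $\sigma$. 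Thus the number of copies of $\mathcal{A}_6$ through $T$ is at least the number of $4$-vertex connected subtrees of $G_T$.

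The substance of the statement is the reverse inclusion: every subgraph $\mathcal{C}$ of $\mathcal{A}_n$ with $\mathcal{C}\cong\mathcal{A}_6$ and $T\in V(\mathcal{C})$ equals $S_\sigma$ for some such $\sigma$; this is the $\mathcal{A}_6$-analogue of Lemma~\ref{pentagon}. Let $d_1,d_2,d_3$ be the diagonals of $T$ whose flips give the three $\mathcal{C}$-neighbours of $T$. The $3$-dimensional associahedron is a simple polytope with nine $2$-faces, namely six pentagons and three squares, and the three squares are pairwise vertex-disjoint (no triangulation of a hexagon contains two crossing diagonals); hence $T$ lies on at least two pentagonal $2$-faces of $\mathcal{C}$, each a $5$-cycle of $\mathcal{A}_n$ through $T$ using two of $d_1,d_2,d_3$. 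By Lemma~\ref{pentagon}, each such $5$-cycle realises its two diagonals as the interior diagonals of a pentagonal ($3$-triangle) sub-region of $T$. Since two distinct $2$-subsets of $\{d_1,d_2,d_3\}$ always meet and together cover all three, the two pentagonal faces share a common diagonal $d_i$, so the two sub-regions share the two triangles of $T$ incident to $d_i$; as the dual tree of $T$ is acyclic, their remaining triangles are distinct, and therefore the union of the two sub-regions is a connected union of four triangles — a hexagon $H$ whose interior diagonals are exactly $d_1,d_2,d_3$. Let $\sigma$ be the $4$-vertex subtree of $G_T$ determined by $H$. Finally, $\mathcal{C}$ and $\mathcal{A}_n[S_\sigma]$ agree on $T$ together with all of its neighbours; carrying out the preceding argument at an arbitrary vertex of $\mathcal{C}$ and propagating along the connected graph $\mathcal{C}$ shows that the hexagon attached to each vertex of $\mathcal{C}$ is this same $H$, whence $\mathcal{C}\subseteq S_\sigma$; since $|\mathcal{C}|=|S_\sigma|=14$ we get $\mathcal{C}=S_\sigma$, completing the bijection.

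I expect the main obstacle to be exactly this last step: showing that the hexagon does not move as one travels around $\mathcal{C}$, equivalently that $\mathcal{A}_n$ contains no ``global'' copy of $\mathcal{A}_6$. The natural way to secure it is to prove invariance directly: if $U\sim U'$ in $\mathcal{C}$, they differ only inside the common quadrilateral of the flipped diagonal, which is interior to the hexagons of both $U$ and $U'$, and along an edge of $\mathcal{A}_6$ only one of the three constituent diagonals changes, so those two hexagons share their other two triangles and hence coincide; connectedness of $\mathcal{C}$ then carries the hexagon from $T$ throughout $\mathcal{C}$. The remaining ingredients — the boundary-edge count, the face structure of $\mathcal{A}_6$, and the bookkeeping with $5$-cycles — are routine and follow the $C_5$ argument already carried out in the paper.
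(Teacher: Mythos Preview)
Your approach is the same as the paper's --- both identify copies of $\mathcal{A}_6$ through $T$ with hexagonal sub-regions of $T$, equivalently with $4$-vertex connected subtrees of the dual tree --- but the paper's proof is a two-sentence assertion of this bijection, while you actually carry out both directions in detail. In that sense your write-up is considerably more complete than the paper's.

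The one place that deserves more care is the propagation step. Your sentence ``along an edge of $\mathcal{A}_6$ only one of the three constituent diagonals changes, so those two hexagons share their other two triangles'' is essentially the conclusion you are trying to reach: a priori the graph isomorphism $\mathcal{C}\cong\mathcal{A}_6$ need not match flip-diagonals at $U$ with flip-diagonals at $U'$. What does work is to use the two $2$-faces of $\mathcal{C}$ through the edge $UU'$ directly. If such a face is a pentagon, Lemma~\ref{pentagon} pins down the pentagonal region from both $U$'s and $U'$'s side, and since $U'$ differs from $U$ only by the flip $d\mapsto d'$, the second interior diagonal of that region in $U'$ is still one of the $d_j$. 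If the face is a square, one checks (by the same reasoning as in Lemma~\ref{pentagon}) that every $4$-cycle in $\mathcal{A}_n$ has the form $U,\,U_a,\,U_{a,b},\,U_b$ with $a,b$ independent flips, so again the neighbour of $U'$ along that face is obtained by flipping one of the remaining $d_j$. Either way the three flip-diagonals at $U'$ are $d',d_j,d_k$, hence $H_{U'}=H_U$, and connectedness finishes the argument. With this refinement your proof is complete, and strictly more detailed than what the paper provides.
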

\begin{proof}
A subgraph isomorphic to $\mathcal{A}_6$ that contains $T$ is the same as a collection of three diagonals of $T$ whose deletion creates a hexagon. These three diagonals correspond to a connected subgraph with four vertices (or three edges) in the dual tree of $T$.
\end{proof}

If $H=(W,F)$ is the dual tree of the triangulation $T$, then a connected subgraph of $W$ with four vertices is either a path $P_4$ or a star $K_{1,3}$. The number of $P_4$s equals $\sum_{xy\in F}(d_x-1)(d_y-1)$.  It is fairly straightforward to show that this sum is minimized with $H$ is the path $P_{n-2}$ for which it equals $n-5$. Hence, every triangulation $T$ is contained in at least $n-5$ subgraphs isomorphic to $\mathcal{A}_6$.

\begin{prop}\label{edgeA6}
Let $n\geq 6$. If $T$ and $T'$ are two adjacent vertices in $\mathcal{A}_n$, then the edge $TT'$ is contained in at least one and at most fourteen subgraphs of $\mathcal{A}_n$ that are isomorphic to $\mathcal{A}_6$.
\end{prop}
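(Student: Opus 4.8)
\emph{Proposal.} The plan is to reduce the statement to a counting problem on the dual tree of $T$, in the same spirit as Proposition \ref{edgec5}, and then to enumerate the relevant four-vertex subtrees by hand. Recall from Proposition \ref{vertexA6} and its proof that the subgraphs of $\mathcal{A}_n$ isomorphic to $\mathcal{A}_6$ containing $T$ are in bijection with the triples of diagonals of $T$ whose deletion creates a hexagon, equivalently with the four-vertex subtrees of the dual tree $H$ of $T$; under this bijection the three deleted diagonals are precisely the diagonals of $T$ corresponding to the three edges of the subtree, and the vertices of the $\mathcal{A}_6$-subgraph are the triangulations of $P$ that agree with $T$ outside the resulting hexagon. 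Let $e\in T$ be the diagonal flipped to pass from $T$ to $T'$, and let $\epsilon=\{t_1,t_2\}$ be the corresponding edge of $H$, where $t_1,t_2$ are the two triangles of $T$ containing $e$. The key observation is that an $\mathcal{A}_6$-subgraph through $T$ also contains $T'$ exactly when the flip of $e$ takes place inside its hexagon, which happens if and only if $e$ is one of the three deleted diagonals, i.e.\ if and only if $\epsilon$ is one of the three edges of the corresponding subtree. Thus the number of $\mathcal{A}_6$-subgraphs containing the edge $TT'$ equals the number of four-vertex subtrees of $H$ containing the fixed edge $\epsilon$.

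It remains to bound this last quantity, using that every vertex of $H$ has degree at most $3$ (each triangle of $T$ has three sides). A four-vertex subtree is a path $P_4$ or a star $K_{1,3}$, and I would classify those containing $\epsilon$ by the position of $\epsilon$: (i) $\epsilon$ is the central edge of a $P_4$, so the path is $a\,t_1\,t_2\,b$ with $a\in N(t_1)\setminus\{t_2\}$ and $b\in N(t_2)\setminus\{t_1\}$ --- at most $2\cdot 2=4$; (ii) $\epsilon$ is an end edge of a $P_4$ with $t_1$ its interior endpoint, so the path is $t_2\,t_1\,a\,b$ with $a\in N(t_1)\setminus\{t_2\}$ and $b\in N(a)\setminus\{t_1\}$ --- at most $\sum_{a}(d_a-1)\le 2\cdot 2=4$; (iii) the mirror image of (ii) with $t_2$ interior --- at most $4$; (iv) $\epsilon$ lies in a star $K_{1,3}$ with center $t_1$, whose leaves are $t_2$ together with two vertices of $N(t_1)\setminus\{t_2\}$ --- at most $\binom{2}{2}=1$; (v) the mirror image of (iv) with center $t_2$ --- at most $1$. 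Every four-vertex subtree containing $\epsilon$ falls into exactly one of (i)--(v) (a path read in either direction, a star determined by its center), so there are at most $4+4+4+1+1=14$ of them. For the lower bound, note that $H$ is connected, contains $\epsilon$, and has $n-2\ge 4$ vertices, so one can enlarge $\{t_1,t_2\}$ to a connected subtree on four vertices one vertex at a time; hence there is at least one. This proves the proposition.

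The part requiring the most care is the reduction in the first paragraph: one must check precisely that the edge $TT'$ lying in a given $\mathcal{A}_6$-subgraph is equivalent to $\epsilon$ being an edge of the corresponding subtree, which uses the explicit description of the vertices and edges of an $\mathcal{A}_6$-subgraph (its vertices are the triangulations agreeing with $T$ outside the hexagon, and its edges are the flips of the hexagon's three interior diagonals). The enumeration itself is routine once one is careful not to double count --- remembering that a $P_4$ may be traversed in two directions and that a star containing $\epsilon$ must be centered at an endpoint of $\epsilon$.
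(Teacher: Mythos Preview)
Your proof is correct and follows essentially the same idea as the paper's: both count hexagons containing the quadrilateral $Q=t_1\cup t_2$, which you phrase as four-vertex subtrees of the dual tree containing the edge $\epsilon$, while the paper phrases it geometrically as ways to attach two further triangles to $Q$. Your case split $4+4+4+1+1$ is just a finer version of the paper's $\binom{4}{2}+4\cdot 2=6+8$, and you additionally supply the (easy) lower-bound argument that the paper's proof leaves implicit.
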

\begin{proof}
The edge $TT'$ corresponds to $n-4$ non-intersecting diagonals in the polygon $P$. These diagonals partition the interior of the polygon into one quadrilateral $Q$ and $n-4$ triangles. There is at most one triangle neighboring the quadrilateral on each of its four sides. Therefore, there are at most $\binom{4}{2}=6$ ways to choose two of these triangles to obtain a hexagon containing $Q$. Each of these four triangles could have two triangles neighboring them. Thus, $Q$ could also be contained in $4\times 2$ other hexagons. 
\end{proof}

Using the results of this section, one can obtain that 
$$
(n-3)+\lambda_{min}(\mathcal{A}_n)\geq \frac{(2-\sqrt{2})(n-5)}{14}.
$$

Unfortunately, this seems to be a worse estimate than our lower bound in \eqref{lowerbndassoc}. We hope that our methods for bounding the smallest eigenvalue of $\mathcal{A}_n$ can be used for other families of graphs. We finish our paper with a natural open problem, namely 
determining the limit 
$$\lim_{n\rightarrow\infty}\frac{\lambda_{min}(\mathcal{A}_n)}{n-3}.$$

\section*{Acknowledgments} We thank Orest Bucicovschi, Florian Frick, Chris Godsil, Jack Koolen, Sabrina Lato, Zhao Kuang Tan, and Prasad Tetali for their comments and suggestions. We are grateful for Vishesh Jain for his explanation of the results in Section 5.

\end{document}